\newcolumntype{L}{>{$}l<{$}} 
\newtheorem{theorem}{Theorem}[section]
\newtheorem{lemma}[theorem]{Lemma}
\newtheorem{cor}[theorem]{Corollary}
\newtheorem{prop}[theorem]{Proposition}
\newtheorem{setup}[theorem]{Setup}
\theoremstyle{definition}
\newtheorem{definition}[theorem]{Definition}
\newtheorem{notation}[theorem]{Notation}
\theoremstyle{remark}
\newtheorem{remark}[theorem]{Remark}
\newtheorem{the context}[theorem]{The Context}
\newtheorem{question}[theorem]{Question}
\numberwithin{equation}{theorem}
\numberwithin{equation}{section}
\newcommand{\cat}[1]{\mathcal{#1}}
\newcommand{\tor}{\operatorname{Tor}}
\newcommand{\Ker}{\operatorname{Ker}}
\newcommand{\ideal}[1]{\mathfrak{#1}}
\newcommand{\m}{\ideal{m}}
\renewcommand{\geq}{\geqslant}
\renewcommand{\leq}{\leqslant}
\renewcommand{\ker}{\Ker}
\newcommand{\maps}[5]{\xymatrix{#1 \ar[r]^-{#3} & #2 \\
#4 \ar@{|->}[r] & #5 \\}}
\newcommand{\mfa}{\mathfrak{a}}
\def\w{\wedge}
\begin{document}
\title[Vanishing of Avramov Obstructions]{Vanishing of Avramov Obstructions for Products of Sequentially Transverse Ideals}

\keywords{Avramov obstructions, transverse ideals, Koszul homology, free resolutions, DG-algebras, Tor-algebras}

\subjclass{13D02, 13D07, 13C13}

\author{Keller VandeBogert}
\date{\today}

\maketitle

\begin{abstract}
    Two ideals $I$ and $J$ are called transverse if $I \cap J = IJ$. We show that the obstructions defined by Avramov for classes of (sequentially) transverse ideals in regular local rings are always $0$. In particular, we compute an explicit free resolution and Koszul homology for all such ideals. Moreover, we construct an explicit trivial Massey operation on the associated Koszul complex and hence (by Golod's construction) a minimal free resolution of the residue field over the quotient defined by the product of transverse ideals. We conclude with questions about the existence of associative multiplicative structures on the minimal free resolution of the quotient defined by products of transverse ideals.
\end{abstract}

\section{Introduction}

Let $(R , \m , k)$ be a regular local ring. Given an ideal $I \subseteq R$, let $(F_\bullet,d_\bullet)$ denote a minimal free resolution of $R/I$. It is always possible to construct a morphism of complexes $(F \otimes_R F)_\bullet \to F_\bullet$ extending the identity in homological degree $0$; this induces a product $\cdot: F_i \otimes F_j \to F_{i+j}$. Tracing through the definition of the tensor product complex, one finds that this product satisfies the following identity:
$$d_{i+j} (f_i \cdot f_j) = d_i (f_i) \cdot f_j + (-1)^i f_i \cdot d_j (f_j).$$
In general, this product need not be associative (though it is always associative up to homotopy). When a product satisfying the above identity \emph{is} associative, we say that $F_\bullet$ admits the structure of an associative DG algebra.

The interest in constructing associative DG-algebra structures on minimal free resolutions was (arguably) sparked by Buchsbaum and Eisenbud in \cite{buchsbaum1977algebra}. In this paper, a version of what became the Buchsbaum-Eisenbud-Horrocks conjecture was proved for resolutions of length $3$; moreover, the authors conjectured the stronger result that any minimal free resolution of a quotient ring $R/I$ admits the structure of a commutative associative DG-algebra. This latter conjecture turned out to be false, with a counterexample having been produced in the context of Massey products by Khinich (see the appendix of \cite{avramov1974hopf}). The Buchsbaum-Eisenbud-Horrocks conjecture has remained open, however (although the weaker total rank conjecture was proved by Walker \cite{walker2017total} in $2017$).

In \cite{avramov1981obstructions}, Avramov constructed obstructions that could detect the non-existence of DG-module structures of complexes over other complexes. If $R/I$ has a minimal DG-algebra resolution $F_\bullet$, then for any complete intersection $\mfa \subseteq I$, $F_\bullet$ admits the structure of a DG-module over the resolution of $R/\mfa$. Using this fact, Avramov proved that the minimal free resolution $F_\bullet$ of the quotient defined by $(x_1^2 , x_1 x_2 , x_2 x_3 , x_3 x_4 , x_4^2) \subset R := k[x_1 , \dots , x_4]$ does \emph{not} admit a DG-module structure over the resolution of $R/(x_1^2 , x_4^2)$, whence $F_\bullet$ admits no associative multiplicative structure. One is tempted to ask if the vanishing of these obstructions is sufficient for the existence of such a multiplicative structure, but Srinivasan has shown that if $F_\bullet$ is the minimal free resolution of the ideal of $4 \times 4$ pfaffians of a $6 \times 6$ skew symmetric matrix, then $F_\bullet$ does not admit the structure of an associative DG-algebra, even though the obstructions defined by Avramov \emph{do} vanish (see \cite{srinivasan1992non}).

In this paper, we explore the vanishing of the aforementioned obstructions for the minimal free resolution of products of so-called sequentially transverse ideals (see Definition \ref{def:transverse}). In particular, this involves the construction of a minimal free resolution for all such products (see Proposition \ref{prop:resofTransverse}), and the construction of a DG-module structure over the minimal free resolution of any complete intersection contained in these products (see Theorem \ref{thm:Dgmodules}). Along the way, we compute Koszul homology for products of sequentially transverse ideals and ask whether or not the minimal free resolution of the quotient defined by a product of sequentially transverse ideals admits the structure of an associative DG-algebra, assuming each ideal separately has a DG-algebra minimal free resolution.

Quotient rings defined by products of transverse ideals have also been called ``minimal intersections" in work by Jorgensen and Moore (see \cite{jorgensen2009minimal}), where it is shown that certain classes of modules over minimal intersections have nontrivial vanishing of Ext and Tor. Likewise, Avramov has studied the relationship between the homological data of the rings $R/I$, $R/J$, and $R/I \otimes_R R/J$ for transverse ideals $I$ and $J$ when $R$ is a regular local ring (see \cite{avramov1975homology}).

The paper is organized as follows. In Section \ref{sec:background}, we introduce the conventions, notation, and definitions to be used throughout the rest of the paper, including the definition of the Avramov obstructions (see Definiton \ref{def:avramovobstructions}). In Section \ref{sec:theResolution}, we show that the free resolution of the product of transverse ideals may be computed explicitly based on the resolutions of each ideal separately. The complex constructed here has been called the \emph{star product} by Geller in \cite{gellerFiberProds}. Next, we compute the Koszul homology for products of sequentially transverse ideals and use this to produce an explicit minimal free resolution of the residue field $k$ over the ring $R/IJ$. This particular resolution is much simpler than the general construction of Golod, due to the trivial Massey operation computed in Lemma \ref{cor:trivialprod}.

Finally, in Section \ref{sec:VanishingofObs}, we show that (under suitable hypotheses) the Avramov obstructions for products of sequentially transverse ideals are trivial. In particular, combining this with the triviality of the Tor-algebra multiplication, we obtain the injectivity of certain maps of Tor (see Corollary \ref{cor:injTor}). Finally, we end with a question about the existence of multiplicative structures on the complex of Definition \ref{def:starprod}.

\section{Transverse Ideals and Avramov Obstructions}\label{sec:background}

In this section, we introduce some necessary background, not least of which is the definition of the \emph{Avramov obstructions} to the existence of multiplicative structures on resolutions. The connection between these obstructions and the existence of DG-module (and hence DG-algebra) structures is recalled. Next, the main objects of study in this paper, \emph{transverse} ideals, are defined along with some standard results on Tor-rigidity.

Throughout the paper, all complexes will be assumed to have nontrivial terms appearing only in nonnegative homological degrees. 

\begin{notation}
The notation $(F_\bullet , d_\bullet)$ will denote a complex $F_\bullet$ with differentials $d_\bullet$. When no confusion may occur, $F$ may be written, where the notation $d^F$ is understood to mean the differential of $F$ (in the appropriate homological degree).

Given a complex $F_\bullet$ as above, elements of $F_n$ will often be denoted $f_n$, without specifying that $f_n \in F_n$.
\end{notation}

\begin{definition}\label{def:dga}
A \emph{differential graded algebra} $(F,d)$ (DG-algebra) over a commutative Noetherian ring $R$ is a complex of finitely generated free $R$-modules with differential $d$ and with a unitary, associative multiplication $F \otimes_R F \to F$ satisfying
\begin{enumerate}[(a)]
    \item $F_i F_j \subseteq F_{i+j}$,
    \item $d_{i+j} (x_i x_j) = d_i (x_i) x_j + (-1)^i x_i d_j (x_j)$,
    \item $x_i x_j = (-1)^{ij} x_j x_i$, and
    \item $x_i^2 = 0$ if $i$ is odd,
\end{enumerate}
where $x_k \in F_k$.
\end{definition}

\begin{prop}\label{prop:dgmoduleoverK}
Let $(R , \m , k)$ denote a regular local ring. Suppose the minimal free resolution $F_\bullet$ of $R/I$ admits the structure of an associative DG-algebra. Then for any complete intersection $\mfa \subseteq I$, $F_\bullet$ admits the structure of a DG $K_\bullet$-module, where $K_\bullet$ is the minimal free resolution of $R/\mfa$.
\end{prop}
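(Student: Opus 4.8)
The plan is to construct a morphism of DG algebras $\varphi\colon K_\bullet \to F_\bullet$ lifting the canonical surjection $R/\mfa \twoheadrightarrow R/I$, and then to endow $F_\bullet$ with a DG $K_\bullet$-module structure by restricting scalars along $\varphi$, i.e.\ by setting $\kappa \cdot f := \varphi(\kappa)\, f$ with the product on the right taken in $F_\bullet$.

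To begin, since $\mfa \subseteq I$ is a complete intersection, write $\mfa = (a_1, \dots, a_c)$ for a regular sequence $a_1, \dots, a_c \in \m$; then $K_\bullet = \bigwedge^\bullet R^c$ is the Koszul complex on $a_1,\dots,a_c$, which is the minimal free resolution of $R/\mfa$ and carries its standard DG-algebra structure with exterior generators $e_1,\dots,e_c \in K_1$ and $d^K e_i = a_i$. The key structural fact I will use is that $K_\bullet$ is the \emph{free} graded-commutative DG $R$-algebra on the $e_i$: a morphism of DG algebras $K_\bullet \to F_\bullet$ is the same datum as a choice of elements $y_i \in F_1$ with $d^F_1(y_i) = a_i$ (the image of $a_i$ under $K_0 = R \to R = F_0$), since the underlying graded algebra is freely generated and the chain-map condition is forced on generators and then propagated by the Leibniz rule.

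Such $y_i$ exist precisely because $\mfa \subseteq I = \im\big(d^F_1\colon F_1 \to F_0\big)$, so I may pick $\widetilde e_i \in F_1$ with $d^F_1(\widetilde e_i) = a_i$ and take $\varphi(e_i) = \widetilde e_i$, extending multiplicatively. That this is well-defined on $K_\bullet = \bigwedge^\bullet R^c$ uses exactly the graded-commutativity axioms (c) and (d) of Definition~\ref{def:dga} for $F_\bullet$, namely $\widetilde e_i \widetilde e_j = -\widetilde e_j \widetilde e_i$ and $\widetilde e_i^{\,2} = 0$; that $\varphi$ is a chain map follows by induction on homological degree from the Leibniz identity (b) on both $K_\bullet$ and $F_\bullet$ together with multiplicativity of $\varphi$. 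Finally, with $\varphi$ in hand, the DG $K_\bullet$-module axioms for the action $\kappa \cdot f := \varphi(\kappa)\,f$ are inherited directly from the DG-algebra axioms of $F_\bullet$: the Leibniz rule for the action comes from axiom (b) for $F_\bullet$ plus the fact that $\varphi$ is a chain map, unitality from $\varphi(1_K) = 1_F$, and the associativity law $(\kappa\kappa')\cdot f = \kappa\cdot(\kappa'\cdot f)$ from $\varphi$ being an algebra map combined with associativity of the multiplication on $F_\bullet$.

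The substantive point — and the only place where the hypothesis is used — is the construction of $\varphi$ as an algebra homomorphism and the verification of module-associativity: both would fail if $F_\bullet$ carried merely a (possibly non-associative) product satisfying the Leibniz rule, since then $\varphi(\kappa)\,f$ need not respect composition in the algebra. The assumption that $F_\bullet$ is a genuine associative DG algebra is exactly what lets the multiplication be transported through $\varphi$ to yield a strict DG-module structure rather than one holding only up to homotopy. I do not expect any other step to present difficulty; the remaining verifications are the routine bookkeeping of signs in the inductive check that $\varphi$ commutes with differentials.
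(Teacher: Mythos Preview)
Your proof is correct and follows essentially the same approach as the paper: lift the generators $a_i$ of $\mfa$ to elements of $F_1$ via a comparison map, then use the product on $F_\bullet$ to extend this to a $K_\bullet$-action. The paper packages this via Proposition~\ref{prop:extendproduct}, defining the action first on $K_1$ and factoring through the exterior algebra using $f_1\cdot(f_1\cdot f_i)=0$, which has the mild advantage of showing that only the partial product $F_1\otimes F_i\to F_{i+1}$ (rather than the full associative algebra structure) is needed; your restriction-of-scalars formulation is equivalent under the stated hypotheses.
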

The proof of Proposition \ref{prop:dgmoduleoverK} is essentially that of Proposition \ref{prop:extendproduct}.

\begin{definition}\label{def:avramovobstructions}
Let $(R, \m ,k )$ denote a local ring and $f : R \to S$ a morphism of rings. Let $\tor_+^R (S , k)$ denote the subalgebra of $\tor_\bullet^R (S , k)$ generated in positive homological degree. For any $S$-module $M$, there exists a map of graded vector spaces:
$$\frac{\tor_\bullet^R (M , k)}{\tor_+^R (S,k) \cdot \tor_\bullet^R (M , k)} \to \tor_\bullet^S (M , k).$$
The kernel of this map is denoted $o^f (M)$ and is called the \emph{Avramov obstruction}.
\end{definition}
The following Theorem makes clear why $o^f (M)$ is referred to as an obstruction.

\begin{theorem}[\cite{avramov1981obstructions}, Theorem 1.2]\label{thm:obstructions}
Let $(R, \m ,k )$ denote a local ring and $f : R \to S$ a morphism of rings. Assume that the minimal $R$-free resolution $F_\bullet$ of $S$ admits the structure of an associative DG-algebra. If $o^f (M) \neq 0$, then no DG $F_\bullet$-module structure exists on the minimal $R$-free resolution of $M$.
\end{theorem}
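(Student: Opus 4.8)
The plan is to prove the contrapositive. Assume the minimal $R$-free resolution $G_\bullet$ of $M$ carries a DG $F_\bullet$-module structure, where $F_\bullet$ is the minimal $R$-free resolution of $S$ equipped with its associative DG-algebra structure; I will show $o^f(M)=0$, i.e. that the canonical map
$$\tor^R_\bullet(M,k)\big/\bigl(\tor^R_+(S,k)\cdot\tor^R_\bullet(M,k)\bigr)\longrightarrow\tor^S_\bullet(M,k)$$
of Definition \ref{def:avramovobstructions} is injective. First I would record the chain-level picture. By minimality, $\tor^R_\bullet(S,k)=F_\bullet\otimes_R k$ and $\tor^R_\bullet(M,k)=G_\bullet\otimes_R k$, both with zero differential, the algebra structure on the former and the module structure of the latter over it being induced by the DG multiplications $F_i\otimes_R F_j\to F_{i+j}$ and $F_i\otimes_R G_j\to G_{i+j}$. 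In particular the source of the canonical map is literally $(G_\bullet\otimes_R k)\big/\ol F_+\cdot(G_\bullet\otimes_R k)$, where $\ol F=F\otimes_R k$ and $\ol F_+$ is its positive-degree part; this complex again has zero differential.

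Next I would compute $\tor^S_\bullet(M,k)$ through the $F_\bullet$-module structure. Choose an $F_\bullet$-semifree resolution $\ti G_\bullet\res M$; since the augmentation $F_\bullet\res S$ is a quasi-isomorphism of DG algebras, $S\otimes_{F_\bullet}\ti G_\bullet$ is an $S$-free resolution of $M$, whence $\tor^S_\bullet(M,k)=\HH_\bullet(k\otimes_{F_\bullet}\ti G_\bullet)$. Using semifreeness of $\ti G_\bullet$, lift $\Id_M$ to a morphism of DG $F_\bullet$-modules $\varphi\colon G_\bullet\to\ti G_\bullet$; it is automatically an $R$-quasi-isomorphism (both sides resolve $M$) and is degreewise $R$-split (a comparison map out of a minimal resolution). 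Applying $k\otimes_{F_\bullet}(-)$ and using that $G_\bullet\otimes_R k$, hence also $k\otimes_{F_\bullet}G_\bullet=(G_\bullet\otimes_R k)/\ol F_+(G_\bullet\otimes_R k)$, has zero differential, the map $\HH_\bullet(k\otimes_{F_\bullet}\varphi)$ is a map from $\tor^R_\bullet(M,k)/\bigl(\tor^R_+(S,k)\cdot\tor^R_\bullet(M,k)\bigr)$ to $\tor^S_\bullet(M,k)$; I would check, by tracking the standard construction of the $\tor$-module structure through $\varphi$, that it coincides with the canonical map of Definition \ref{def:avramovobstructions}.

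The remaining and principal task is to show $\HH_\bullet(k\otimes_{F_\bullet}\varphi)$ is injective, and this is where the DG-module hypothesis is used in an essential way. I would filter $G_\bullet$ and $\ti G_\bullet$ by the powers $F_+^{\,n}G_\bullet$ of the augmentation ideal acting on each, $\varphi$ being a filtered map, and compare the resulting spectral sequences of $k\otimes_{F_\bullet}(-)$. On the $\ti G_\bullet$-side this yields the standard change-of-rings spectral sequence abutting to $\tor^S_\bullet(M,k)$, with the canonical map appearing as the relevant edge homomorphism; on the $G_\bullet$-side the zero differential of $G_\bullet\otimes_R k$ forces immediate degeneration, exhibiting the filtration by powers of $\ol F_+$ on $\tor^R_\bullet(M,k)=G_\bullet\otimes_R k$. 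Since $\varphi$ is filtered and an $R$-quasi-isomorphism, the two spectral sequences and their filtrations on the relevant terms match up, which identifies the kernel of the edge map in degree $n$ with $\bigl(\ol F_+\cdot(G_\bullet\otimes_R k)\bigr)_n=\bigl(\tor^R_+(S,k)\cdot\tor^R_\bullet(M,k)\bigr)_n$; this is exactly the statement that the canonical map on the quotient is injective. The delicate point I expect to have to be careful with is this last identification of the two filtrations on the abutment — it is precisely here that one needs $G_\bullet$ to be a DG $F_\bullet$-module and not merely an $R$-complex — together with the verification that the edge homomorphism is the canonical map of Definition \ref{def:avramovobstructions}; the degeneration on the $G_\bullet$-side, by contrast, is a formal consequence of $R$-minimality.
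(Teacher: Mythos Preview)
The paper does not prove this theorem; it is quoted verbatim from Avramov's 1981 paper and used as a black box, so there is no in-paper argument to compare your proposal against.

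That said, your outline is essentially Avramov's original strategy (semifree resolutions over the DG algebra $F_\bullet$, change of rings $F_\bullet\to S$, and a spectral-sequence/filtration comparison), so the overall architecture is sound. One technical point deserves care: you invoke semifreeness of $\ti G_\bullet$ to produce a DG $F_\bullet$-module map $\varphi\colon G_\bullet\to\ti G_\bullet$, but semifreeness is a lifting property \emph{from} the semifree object, not \emph{to} it; what semifreeness of $\ti G_\bullet$ gives directly is a map $\ti G_\bullet\to G_\bullet$ over $\Id_M$. You therefore either need to argue that $G_\bullet$ is itself $F_\bullet$-semiprojective (which does hold here, since $G_\bullet$ is bounded below and each $G_n$ is $R$-free, hence $G_\bullet$ admits an exhaustive semifree filtration over $F_\bullet$; but this requires justification), or run the comparison with the map going the other way and show it furnishes a section of the canonical map after passing to $k\otimes_{F_\bullet}(-)$. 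Either fix works, but as written the lifting step is stated in the wrong direction.
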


In the case that $S = R/J$ and $J$ is generated by a regular sequence, the Avramov obstructions break into ``graded" pieces:
$$o_i^f (M) := \ker \Big( \frac{\tor_i^R (M,k)}{\tor_1^R (S,k) \cdot \tor_{i-1}^R (M , k)} \to \tor_i^S (M , k) \Big).$$
Observe that Proposition \ref{prop:dgmoduleoverK} may then be rephrased in the following manner:
\begin{prop}\label{prop:vanishofObs}
Let $(R , \m , k)$ denote a regular local ring. Suppose the minimal free resolution $F_\bullet$ of $R/I$ admits the structure of an associative DG-algebra. Then for any complete intersection $\mfa \subseteq I$, $o_i^f (R/I) = 0$ for all $i >0$, where $f : R \to R/\mfa$ is the natural quotient map.
\end{prop}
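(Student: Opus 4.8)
The plan is to recover this statement as a reformulation of Proposition \ref{prop:dgmoduleoverK} via the contrapositive of Theorem \ref{thm:obstructions}, after observing that the Avramov obstruction splits into its graded pieces and that the Koszul resolution is automatically a DG-algebra.

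First I would fix a minimal generating set $a_1,\dots,a_n$ of $\mfa$. Since $R$ is regular local and $\mfa$ is a complete intersection, $a_1,\dots,a_n$ is a regular sequence, so the Koszul complex $K_\bullet$ on $a_1,\dots,a_n$ is the minimal $R$-free resolution of $R/\mfa$, and it carries its standard structure of a graded-commutative associative DG-algebra. Because $K_\bullet\otimes_R k$ has zero differential, $\tor^R_\bullet(R/\mfa,k)\cong\Lambda^\bullet(k^n)$ is generated as a $k$-algebra in homological degree $1$; hence for any $S$-module $M$ one has $\tor^R_+(R/\mfa,k)\cdot\tor^R_\bullet(M,k)=\tor^R_1(R/\mfa,k)\cdot\tor^R_\bullet(M,k)$, and the comparison map of Definition \ref{def:avramovobstructions} preserves homological degree. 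Consequently its kernel splits as $o^f(M)=\bigoplus_i o_i^f(M)$, and the degree-$i$ summand is exactly the $o_i^f(M)$ displayed before the statement. In particular it suffices to prove $o^f(R/I)=0$.

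Now apply Theorem \ref{thm:obstructions} with $S=R/\mfa$, with $f\colon R\to R/\mfa$ the quotient map, and with $M=R/I$, which is an $S$-module since $\mfa\subseteq I$. Here the complex playing the role of ``$F_\bullet$'' in that theorem is the Koszul resolution $K_\bullet$ of $S=R/\mfa$, which we have just noted is an associative DG-algebra, while the ``minimal $R$-free resolution of $M$'' is precisely the given DG-algebra resolution $F_\bullet$ of $R/I$. By Proposition \ref{prop:dgmoduleoverK}, $F_\bullet$ admits the structure of a DG $K_\bullet$-module; so the contrapositive of Theorem \ref{thm:obstructions} forces $o^f(R/I)=0$. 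Combined with the splitting above, this gives $o_i^f(R/I)=0$ for all $i$, in particular for every $i>0$.

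The argument carries no genuine technical obstacle; the one point requiring care is the bookkeeping in the application of Theorem \ref{thm:obstructions}, namely keeping straight which resolution is the ambient DG-algebra (the Koszul complex $K_\bullet$ resolving $R/\mfa$) and which is the resolution receiving the DG-module structure (the algebra resolution $F_\bullet$ of $R/I$), together with the remark that the Koszul complex is always an associative DG-algebra, so that Proposition \ref{prop:dgmoduleoverK} supplies exactly the hypothesis the theorem requires.
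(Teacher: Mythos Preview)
Your argument is correct and is exactly the approach the paper intends: the paper presents this proposition as a direct rephrasing of Proposition~\ref{prop:dgmoduleoverK}, and the mechanism for that rephrasing is precisely the contrapositive of Theorem~\ref{thm:obstructions} applied with $S=R/\mfa$ resolved by the Koszul complex $K_\bullet$ and $M=R/I$ resolved by $F_\bullet$. Your additional remark explaining why $o^f$ splits into the graded pieces $o_i^f$ (via $\tor^R_\bullet(R/\mfa,k)$ being generated in degree~$1$) is a welcome clarification that the paper leaves implicit.
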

Proposition \ref{prop:vanishofObs} may tempt one to ask whether the vanishing of Avramov obstructions implies the existence of a DG algebra structure on the minimal free resolution of $R/I$. An answer in the negative was provided by Srinivasan (see \cite{srinivasan1992non}) The following definition introduces the main objects of interest in this paper.

\begin{definition}\label{def:transverse}
Two ideals $I$ and $J$ are \emph{transverse} if $I \cap J = IJ$; equivalently, if $\tor_1^R (R/I , R/J) = 0$.

A family of ideals $I_1 , \dotsc , I_n \subset R$ is \emph{sequentially transverse} if for all $i = 1 , \dots , n-1$,
$$I_1 \cdots I_i \cap I_{i+1} = I_1 \dotsc I_{i+1}.$$
Equivalently,
$$\tor_1 (R/I_1 \cdots I_i , R/I_{i+1}) = 0 \ \textrm{for all} \ i=1, \dots , n-1.$$
\end{definition}

\begin{definition}\label{def:torindependent}
Let $(R,\m)$ be a local ring. Two $R$-modules $M$ and $N$ are \emph{Tor-independent} if $\tor_{i}^R (M, N) = 0$ for all $i \geq 1$. 
\end{definition}
In the case that $(R , \m ,k)$ is a regular local ring, the property of being transverse implies the (generally stronger) property of Tor-independence for the relevant quotient rings.

\begin{prop}\label{prop:rigidity}
Let $(R, \m)$ be a regular local ring. If $I$ and $J$ are transverse ideals, then $R/I$ and $R/J$ are Tor-independent.
\end{prop}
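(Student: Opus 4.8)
The plan is to prove Tor-rigidity over a regular local ring via induction on $\dim R$ (or equivalently, on the projective dimension of $R/J$), reducing to the case of a one-dimensional local ring where the statement follows from the classical rigidity of Tor. The hypothesis we wish to propagate is the following: if $\Tor_1^R(R/I, R/J) = 0$ then $\Tor_i^R(R/I, R/J) = 0$ for all $i \geq 1$. Throughout, $R$ is regular, hence of finite global dimension, so all Tor modules vanish in large degree and every induction on homological length terminates.

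First I would recall the key rigidity input: over a regular local ring (indeed over any complete intersection, but regular suffices here), Tor is rigid, i.e.\ if $\Tor_n^R(M,N) = 0$ for some $n$ then $\Tor_i^R(M,N) = 0$ for all $i \geq n$ (this is the Auslander--Lichtenbaum theorem). Given this, the proposition is essentially immediate: transversality gives $\Tor_1^R(R/I,R/J) = 0$, and rigidity upgrades this to $\Tor_i^R(R/I,R/J) = 0$ for all $i \geq 1$, which is precisely Tor-independence. So the ``proof'' I would write is: invoke Definition \ref{def:transverse} to get $\Tor_1^R(R/I, R/J) = 0$, then cite the rigidity of Tor over regular local rings to conclude $\Tor_i^R(R/I, R/J) = 0$ for all $i \geq 1$, which is the definition of Tor-independence in Definition \ref{def:torindependent}.

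If one instead wants a self-contained argument not quoting rigidity as a black box, the route is the standard reduction: pass to the completion (harmless, as $\Tor$ commutes with the faithfully flat extension $R \to \widehat{R}$ and transversality is preserved), write $\widehat R$ as a quotient of a regular local ring, and then run the Auslander--Buchsbaum--Nagata style argument: choose a regular element that is a nonzerodivisor on $R$, $R/I$, $R/J$, and the relevant syzygies simultaneously (possible since $R/\m$ is infinite after a further harmless flat base change, and the relevant associated primes are finite in number), reduce modulo it, and induct on dimension — the base case $\dim = 1$ being the original rigidity statement of Auslander. The bookkeeping here is that reducing mod a regular element shifts Tor, so one checks $\Tor_1$ of the reduced modules vanishes and applies the inductive hypothesis.

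The main obstacle — really the only one — is that the proposition is genuinely a repackaging of a nontrivial classical theorem (rigidity of Tor over regular local rings). There is no elementary trick that avoids it: transversality is exactly the hypothesis ``$\Tor_1$ vanishes,'' and getting from $\Tor_1 = 0$ to all higher Tor vanishing over a regular ring is precisely the content of rigidity. So the expected form of the author's proof is a one- or two-line citation to the literature (Auslander, or Lichtenbaum, or a textbook such as Bruns--Herzog), and the real work of the paper lies elsewhere — in Section \ref{sec:theResolution} and beyond, where the resolutions and obstructions are constructed explicitly.
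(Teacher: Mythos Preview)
Your proposal is correct and matches the paper's own proof exactly: the author simply states that this is a restatement of the well-known rigidity of Tor over regular local rings and cites the literature. Your anticipation that the real work lies elsewhere is also accurate.
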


\begin{proof}
This is just a restatement of the well-known rigidity of Tor over regular local rings (see \cite[Corollary 2.5]{celikbas2015vanishing}).
\end{proof}

\section{Minimal Free Resolution of Products of Transverse Ideals}\label{sec:theResolution}

In this section, our goal is to give an explicit (minimal) free resolution of the quotient defined by products of transverse ideals. The construction used to do this involves a so-called \emph{star-product}, introduced independently by Geller \cite{gellerFiberProds}. 
It is worth noting that throughout this section, the condition that $(R, \m ,k )$ is a regular local ring can be relaxed to the condition that $R$ is an arbitrary commutative ring with $R/I$ and $R/J$ Tor-independent. 

The following is often (lovingly) called the ``stupid" truncation (see, for instance, \cite[Section 12.15]{stacks-project}).
\begin{notation}
Let $(F_\bullet,d_\bullet)$ denote a complex. The notation $(F_{\geq n}, d_{\geq n} )$ will denote the complex with
$$\big( F_{\geq n} \big)_i := \begin{cases}
F_i & \textrm{if} \ i \geq n \\
0 & \textrm{if} \ i < n \\
\end{cases}$$
$$\big( d_{\geq n} \big)_i := \begin{cases}
d_i & \textrm{if} \ i > n \\
0 & \textrm{if} \ i \leq n \\
\end{cases}$$
\end{notation}

The following definition (and notation) was introduced first by Geller in \cite{gellerFiberProds}, originally in the context of resolving fiber products of residue fields.
\begin{definition}\label{def:starprod}
Let $(F_\bullet , d^F_\bullet)$ and $(G_\bullet, d^G_\bullet)$ denote two complexes. The \emph{star-product} $(F*G)_\bullet$ is defined to be the complex induced by the differentials
$$d^{F*G}_n := \begin{cases}
d^F_1 \otimes d^G_1 & \textrm{if} \ n=1 \\
d^{F_{\geq 1} \otimes G_{\geq 1} }_{n+1} & \textrm{otherwise} \\
\end{cases}$$
\end{definition}

Observe that $(F*G)_\bullet$ is indeed a complex. It is clear that the differentials compose to $0$ for homological degrees at least $2$. For the front two differentials,
\begingroup\allowdisplaybreaks
\begin{align*}
    d_1^{F*G} \circ d_2^{F*G} (f_1 \otimes g_2 + f_2 \otimes g_1) &= d_1^{F*G} ( -f_1 \otimes d_2^G (g_2) + d_2^F (f_2) \otimes g_1 ) \\
    &= -d_1^F (f_1) \otimes d_1^G \circ d_2^G (g_2) + d_1^F \circ d_2^F (f_2) \otimes d_1^G (g_1) \\
    &= 0 .\\
\end{align*}
\endgroup
The following Proposition illustrates the relationship between the star product of Definition \ref{def:starprod} and sequentially transverse families of ideals.
\begin{prop}\label{prop:resofTransverse}
Let $(R, \m , k)$ denote a regular local ring. Let $I$ and $J$ be transverse ideals and let $F_\bullet$, $G_\bullet$ denote free resolutions of $R/I$ and $R/J$, respectively. Then $(F * G)_\bullet$ is a free resolution of $R/IJ$. If $F_\bullet$ and $G_\bullet$ are minimal, then $(F * G)_\bullet$ is also minimal.
\end{prop}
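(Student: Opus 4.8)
The plan is to exhibit $(F*G)_\bullet$ as the complex obtained by splicing a free resolution of the $R$-module $IJ$ onto the short exact sequence $0\to IJ\to R\to R/IJ\to 0$, the resolution of $IJ$ itself arising as a tensor product of free resolutions of $I$ and of $J$. The first observation is that the stupid truncations have transparent homology: since $F_\bullet$ is acyclic in positive degrees, a direct inspection of cycles and boundaries gives $H_n(F_{\geq 1})=0$ for $n\neq 1$ together with an isomorphism $H_1(F_{\geq 1})\cong I$ induced by $d_1^F$, and likewise $H_\bullet(G_{\geq 1})$ is $J$ concentrated in homological degree $1$. Equivalently, lowering homological degrees by one turns $F_{\geq 1}$ into a free resolution $P_\bullet$ of $I$ and $G_{\geq 1}$ into a free resolution $Q_\bullet$ of $J$.

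Second, I would compute $\tor_\bullet^R(I,J)$. By Proposition \ref{prop:rigidity} the rings $R/I$ and $R/J$ are Tor-independent, so $\tor_n^R(R/I,R/J)=0$ for all $n\geq 1$. Routine dimension-shifting along the structure sequences $0\to I\to R\to R/I\to 0$ and $0\to J\to R\to R/J\to 0$ then yields, on the one hand, $\tor_n^R(I,J)\cong\tor_{n+2}^R(R/I,R/J)=0$ for all $n\geq 1$, and, on the other hand, that the multiplication map $\mu\colon I\otimes_R J\to R$, $a\otimes b\mapsto ab$, is injective (its injectivity amounting to the vanishing of $\tor_1^R(I,R/J)$, itself a consequence of Tor-independence). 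Since the image of $\mu$ is $IJ$, we get $I\otimes_R J\cong IJ$, and therefore $P_\bullet\otimes_R Q_\bullet$ is a free resolution of $IJ$.

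Third, I would identify the two complexes. Comparing Definition \ref{def:starprod} with the definition of the tensor product of complexes, one checks that for $n\geq 1$ the term $(F*G)_n$ is exactly the degree-$(n-1)$ term of $P_\bullet\otimes_R Q_\bullet$, with differential $d_n^{F*G}$ equal to the corresponding differential of $P_\bullet\otimes_R Q_\bullet$ up to the customary Koszul signs in the tensor product of complexes (rescaling the summands by suitable signs converts one into the other); meanwhile $(F*G)_0=R$, and $d_1^{F*G}=d_1^F\otimes d_1^G\colon F_1\otimes_R G_1=(P\otimes_R Q)_0\to R$ is exactly the augmentation $(P\otimes_R Q)_0\twoheadrightarrow I\otimes_R J$ followed by $\mu$. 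Thus $(F*G)_\bullet$ is precisely the free resolution $P_\bullet\otimes_R Q_\bullet$ of $IJ$ spliced onto $0\to IJ\hookrightarrow R\to R/IJ\to 0$: its homology in degree $0$ is $R/IJ$; its homology in degree $1$ vanishes because $\mu$ is injective (this is the last place transversality enters); and its homology in degrees $\geq 2$ vanishes because $P_\bullet\otimes_R Q_\bullet$ is acyclic. Equivalently, one can run the long exact homology sequence of $0\to R\to(F*G)_\bullet\to (F*G)_{\geq 1}\to 0$ and recognize the connecting homomorphism as $\mu$.

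For minimality, note first that every term of $(F*G)_\bullet$ is a finite direct sum of tensor products of finitely generated free $R$-modules, hence finitely generated free. If $F_\bullet$ and $G_\bullet$ are minimal, then every entry of every $d_i^F$ and $d_j^G$ lies in $\m$; since the differentials of $P_\bullet\otimes_R Q_\bullet$ are assembled from these, $d_n^{F*G}$ has all entries in $\m$ for $n\geq 2$, while each entry of $d_1^{F*G}$ is a product of an entry of $d_1^F$ with an entry of $d_1^G$ and so lies in $IJ\subseteq\m$. Hence $(F*G)_\bullet$ is minimal. I expect the one genuinely fussy point to be the bookkeeping in the third paragraph — keeping the homological indices and the tensor-product signs straight and verifying that $d_1^{F*G}$ really is $\mu$ composed with the augmentation — with everything else coming down to Tor-independence and standard dimension-shifting.
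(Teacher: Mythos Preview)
Your argument is correct, but it takes a genuinely different route from the paper's. The paper proves acyclicity via the short exact sequence of complexes
\[
0 \to F_{\geq 1}\oplus G_{\geq 1} \to (F\otimes G)_{\geq 1} \to (F*G)_{\geq 1}[-1] \to 0,
\]
using Tor-independence of $R/I$ and $R/J$ to see that $(F\otimes G)_{\geq 1}$ has homology concentrated in degree~$1$, and then computing the connecting homomorphism explicitly (invoking $I\cap J=IJ$) to handle $H_1$. You instead dimension-shift to $\tor^R_\bullet(I,J)$ and recognize $(F*G)_{\geq 1}$ directly as a shift of $P_\bullet\otimes_R Q_\bullet$, a resolution of $I\otimes_R J\cong IJ$, and then splice. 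Your approach is arguably cleaner and more conceptual: it explains structurally why the star product works and avoids any explicit connecting-map calculation. The paper's approach, on the other hand, is more closely tied to the short exact sequence $0\to IJ\to I\oplus J\to I+J\to 0$ and to $(F\otimes G)$ as a resolution of $R/(I+J)$; this parallels and foreshadows the Koszul-homology computation in Theorem~\ref{thm:isoofKoszul}, where essentially the same sequence (with $K_\bullet$ in place of $F_\bullet,G_\bullet$) reappears. Your caveat about the sign bookkeeping in identifying the differentials is well placed but harmless: an isomorphism of complexes differing only by signs on summands preserves acyclicity.
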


\begin{proof}
The latter statement about minimality is clear. It remains only to show acyclicity. There is a short exact sequence of complexes
$$0 \to F_{\geq 1} \oplus G_{\geq 1} \to (F \otimes G)_{\geq 1} \to (F* G)_{\geq 1}[-1] \to 0,$$
so by the long exact sequence of homology, $H_i (F*G) = 0$ for $i \geq 2$. For $i=1$, there is a short exact sequence
$$0 \to H_1 (F*G_{\geq 1} ) \to H_1 (F_{\geq 1}) \oplus H_1 (G_{\geq 1}) \to H_1 (F \otimes G_{\geq 1}) \to 0,$$
and the connecting morphism is computed as the map 
$$[f_1 \otimes g_1] \mapsto (-d^G(g_1) [f_1] , d^F(f_1) [g_1])$$ 
(where $[-]$ denotes homology class). Identifying the homology appearing in the latter two terms of the short exact sequence and using that $I \cap J = IJ$, this implies that $H_1 (F*G_{\geq 1}) \cong IJ$ via the map $[f_1 \otimes g_1] \mapsto d^F(f_1) d^G(g_1)$. Thus, augmenting $(F*G)_{\geq 1}$ by the map $d_1^F \otimes d^G_1$ remains acyclic.
\end{proof}

\begin{remark}
By iterating Proposition \ref{prop:resofTransverse}, one finds that if $I_1 , \dots , I_n$ is a family of sequentially transverse ideals with $F_\bullet^i$ a free resolution of $R/I_i$ for each $1 \leq i \leq n$, then a free resolution of $R/I_1 \cdots I_n$ is obtained as
$$(F^1 * F^2 * \cdots * F^n)_\bullet.$$
\end{remark}

\section{Koszul Homology for Products of Transverse Ideals and A Minimal Free Resolution of the Residue Field}\label{sec:Koszulhom}

In this section, we compute Koszul homology for quotients defined by products of sequentially transverse ideals. The basis given can be thought of a ``shifted" version of a K\"unneth formula. Moreover, we use this explicit Koszul homology to compute a trivial Massey operation on the Koszul homology algebra of $R/IJ$; this allows us to employ a standard construction of Golod to deduce a rather simple minimal free resolution of the residue field $k$ over the ring $R/IJ$. 

\begin{definition}
Let $(R, \m ,k)$ denote a local ring or a standard graded polynomial ring over a field $k$. The Koszul homology of an $R$-module $M$, denoted $H_i (M)$, is defined as
$$H_i (M) := H_i (K_\bullet \otimes_R M),$$
where $K_\bullet$ is the Koszul complex resolving $k$.
\end{definition}

\begin{theorem}\label{thm:isoofKoszul}
Let $I$ and $J$ be transverse ideals. Then the morphism of vector spaces
\begingroup\allowdisplaybreaks
\begin{align*}
    \bigoplus_{\substack{i+j=n+1 \\ 
    i, \ j \geq 1}} H_i (R/I) \otimes_k H_j (R/J) &\to H_n (R/IJ) \\
    [z_1^I] \otimes [z_2^J] &\mapsto [z_1^I \w d(z_2^J) ] \\
\end{align*}
\endgroup
is an isomorphism.
\end{theorem}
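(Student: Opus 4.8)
The plan is to reduce the statement about $R/IJ$ to the statement about the star-product resolution constructed in Proposition \ref{prop:resofTransverse}, and then to identify Koszul homology as the homology of the Koszul complex tensored with that resolution. Concretely, let $F_\bullet$ and $G_\bullet$ be the minimal free resolutions of $R/I$ and $R/J$; by Proposition \ref{prop:resofTransverse}, $(F*G)_\bullet$ is the minimal free resolution of $R/IJ$, and for any free resolution $P_\bullet$ of an $R$-module $M$ we have $H_i(M) = H_i(K_\bullet \otimes_R M) = H_i(K_\bullet \otimes_R P_\bullet) = \tor_i^R(k,M)$. So the claim becomes: $\bigoplus_{i+j=n+1,\, i,j\geq 1} H_i(K\otimes F) \otimes_k H_j(K\otimes G) \to H_n(K \otimes (F*G))$ is an isomorphism, with the indicated formula. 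Since $(F*G)_{\geq 1}$ is, up to a shift by $-1$, built from $(F_{\geq 1}\otimes G_{\geq 1})$ via the short exact sequence
$$0 \to F_{\geq 1}\oplus G_{\geq 1} \to (F\otimes G)_{\geq 1} \to (F*G)_{\geq 1}[-1] \to 0$$
appearing in that proof, everything should follow from the Künneth formula for $K\otimes F \otimes G$ over the field $k$ together with a careful bookkeeping of the shift.

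First I would tensor the above short exact sequence of complexes with $K_\bullet$ over $R$ (it stays exact since the terms are free) and pass to the long exact homology sequence. The middle term has homology $H_\bullet(K\otimes F_{\geq 1}\otimes G_{\geq 1})$, which by the Künneth theorem over the field $k$ is $\bigoplus_{i+j=\bullet} H_i(K\otimes F_{\geq 1})\otimes_k H_j(K\otimes G_{\geq 1})$; and since truncating below degree $1$ only kills $H_0$, this is $\bigoplus_{i+j=\bullet,\,i,j\geq 1} H_i(R/I)\otimes_k H_j(R/J)$ in the relevant range. The outer term $K\otimes(F_{\geq 1}\oplus G_{\geq 1})$ contributes $H_i(R/I)\oplus H_i(R/J)$ for $i\geq 1$. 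I would then compare the connecting maps with the one written in the proof of Proposition \ref{prop:resofTransverse}, namely $[f_1\otimes g_1]\mapsto(-d^G(g_1)[f_1], d^F(f_1)[g_1])$, tensored with $K$; the key point is that this map is injective in the relevant degree (this is exactly where transversality, i.e. $I\cap J = IJ$ / Tor-independence of $R/I$ and $R/J$, enters, via Proposition \ref{prop:rigidity}). Injectivity of the connecting map forces the long exact sequence to split into short exact sequences, yielding $H_{n+1}(K\otimes(F*G)_{\geq 1}[-1]) = H_n(K\otimes(F*G)_{\geq 1}) \cong \coker$ of the connecting map in degree $n+1$.

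The remaining task is to identify this cokernel with $\bigoplus_{i+j=n+1,\,i,j\geq 1} H_i(R/I)\otimes_k H_j(R/J)$ via the stated formula. The cleanest route is to observe that the Künneth isomorphism for $K\otimes F\otimes G$ (now with the \emph{full}, untruncated $F$ and $G$, which resolve $R/I$ and $R/J$) already gives $H_n(R/IJ') \cong \bigoplus_{i+j=n} H_i(R/I)\otimes_k H_j(R/J)$ where $R/IJ'$ would be $R/I \otimes_R R/J$ if those were Tor-independent — but the star-product differs from the tensor product precisely in degrees $0$ and $1$, which is what produces the index shift $i+j=n+1$ and the constraint $i,j\geq 1$, and simultaneously explains why $[z_1^I]\otimes[z_2^J]$ must be sent to $[z_1^I\wedge d(z_2^J)]$ rather than to a naive product of cycles: the extra factor $d(z_2^J)$ is exactly the degree-shift artifact built into the definition of $d^{F*G}$. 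I would make this precise by writing down an explicit cycle-level map realizing the Künneth iso and checking it descends through the truncations, which is a direct diagram chase. The main obstacle, I expect, is not any single hard idea but rather the careful tracking of the degree shift $[-1]$ through the long exact sequence together with verifying that the explicit cycle $z_1^I\wedge d(z_2^J)$ is the correct representative — in particular checking it is a cycle in $K\otimes(F*G)$ and that the assignment is well-defined on homology classes and independent of choices. A secondary check is that the Künneth map is an isomorphism and not merely a surjection, which holds because we work over the field $k$ so all $\tor$ correction terms vanish.
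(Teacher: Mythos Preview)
Your route via the star-product resolution can be made to work, but as written it has a genuine slip and ends up being a detour back to the paper's argument. The short exact sequence you quote has $(F\otimes G)_{\geq 1}$ in the middle, not $F_{\geq 1}\otimes G_{\geq 1}$; these differ precisely by the summands $F_0\otimes G_{\geq 1}$ and $F_{\geq 1}\otimes G_0$. Consequently your K\"unneth step is applied to the wrong object, and the justification ``over the field $k$'' is not what makes it work: $K\otimes_R F$ and $G$ are complexes of $R$-modules, so the ordinary K\"unneth theorem does not apply. What is actually true is that $(F\otimes G)_{\geq 1}$ is quasi-isomorphic to $(I+J)[1]$ \emph{because} $R/I$ and $R/J$ are Tor-independent (Proposition~\ref{prop:rigidity}), and then $H_\bullet(K\otimes(F\otimes G)_{\geq 1})\cong H_{\bullet-1}(I+J)$; the decomposition of $H_\bullet(I+J)$ as a tensor product of $H_\bullet(R/I)$ and $H_\bullet(R/J)$ again requires Tor-independence, not merely that the homology groups are $k$-vector spaces.

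Once you make those identifications, your long exact sequence becomes literally the one the paper uses, namely the long exact sequence of Koszul homology attached to $0\to IJ\to I\oplus J\to I+J\to 0$: the three terms $K\otimes(F_{\geq 1}\oplus G_{\geq 1})$, $K\otimes(F\otimes G)_{\geq 1}$, and $K\otimes(F*G)_{\geq 1}[-1]$ compute $H_{\bullet-1}(I)\oplus H_{\bullet-1}(J)$, $H_{\bullet-1}(I+J)$, and $H_{\bullet-1}(IJ)$ respectively. The paper simply starts from the module-level short exact sequence directly, avoiding the resolution bookkeeping and the shift-tracking you flag as the ``main obstacle.'' Note also that the splitting of the long exact sequence comes not from injectivity of a connecting map as you suggest, but from injectivity of the \emph{ordinary} map $H_n(I)\oplus H_n(J)\to H_n(I+J)$ (equivalently, $H_n(IJ)\to H_n(I)\oplus H_n(J)$ is zero), which one reads off from the explicit K\"unneth basis of $H_n(I+J)$; this is the step where the explicit cycle representative $[z_1^I\wedge d(z_2^J)]$ falls out.
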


\begin{proof}
We will instead compute the Koszul homology $H_i (IJ)$ and use the isomorphism $H_{i+1} (R/IJ) \cong H_i (IJ)$ given by sending a cycle $[z] \mapsto [d(z)]$ (where $d$ denotes the Koszul differential). Using a K\"unneth formula along with the isomorphism just mentioned, $H_n (I+J)$ has basis given by all elements of the form
$$[d(z_1) \w z_2 ] + (-1)^{|z_1|} [ z_1 \w d(z_2) ],$$
where $z_1$ and $z_2$ represent basis elements for $H_i (I)$ and $H_{n-i} (J)$, respectively (for some $0 \leq i \leq n$). Using these basis elements along with the short exact sequence
$$0 \to IJ \to I \oplus J \to I+J \to 0,$$
one immediately computes the connecting homomorphism $H_n (I+J ) \to H_{n-1} (IJ)$ as
$$[d(z_1 \w z_2 )] \mapsto \begin{cases} (-1)^{|z_1|} [ d(z_1) \w d(z_2)] & \textrm{if} \ 1 \leq |z_1| \leq n-1 \\
0 & \textrm{otherwise}. \\
\end{cases}$$
Moreover, since $H_n(I ) \oplus H_n (J) \to H_n (I+J)$ is an injection, the map $H_n (IJ) \to H_n(I) \oplus H_n (J)$ must be the $0$ map. This means that the long exact sequence of homology splits into short exact sequences
$$0 \to H_n (I) \oplus H_n(J) \to H_n (I+J) \to H_{n-1} (IJ) \to 0.$$
This immediately implies that $H_{n-1} (IJ)$ has basis given by all cycles of the form $[d(z_1) \w d(z_2)]$, where $[z_1]$ and $[z_2]$ are basis elements of $H_i (I)$ and $H_{n-i} (I)$, respectively, and $1 \leq i \leq n-1$. To conclude the proof, simply observe that
$$\big[ d(z_1 \w d(z_2) ) \big] = [d(z_1) \w d(z_2)].$$
\end{proof}

\begin{remark}
If $I_1 , \dots , I_n$ is a family of sequentially transverse ideals, then an iteration of Theorem \ref{thm:isoofKoszul} yields that $H_\ell (R/I_1 \cdots I_n)$ has basis given by all elements if the form
$$[z_1^{I_1} \w d(z_2^{I_2} ) \w \cdots \w d(z_n^{I_n}) ],$$
where each $[z_i^{I_i}]$ is a basis element for $H_{j_i} (R/I_i)$, and $j_1 + \cdots + j_n = \ell + n-1$, $j_i \geq 1$ for each $i$.

Moreover, observe that the Leibniz rule implies that $[z_1 \w d(z_2)]$ is a basis if and only if $[d(z_1) \w z_2]$ is also a basis for the Koszul homology, implying that Theorem \ref{thm:isoofKoszul} is symmetric with respect to $I$ and $J$. 
\end{remark}

For the remainder of this section, our goal will be to construct the minimal free resolution of the residue field $k$ over the quotient ring $R/IJ$. This will involve the construction of a trivial Massey operation on the algebra $R/IJ \otimes_R K_\bullet$ for transverse ideals $I$ and $J$; for the definition of trivial Massey operations and the result employed in Corollary \ref{cor:resofk}, see Avramov's well-known work \cite[Section 5]{avramov1998infinite}.

\begin{setup}\label{set:residueFieldSet}
Let $I$ and $J$ be transverse ideals, and enumerate $k$-bases $\{ [z_{a}^I] \}_{a \in A}$ and $\{ [z_b^J] \}_{b \in B}$ for the homology algebras $H_{\geq 1} (R/I)$ and $H_{\geq 1} (R/J)$, respectively. By Theorem \ref{thm:isoofKoszul}, a $k$-basis for the homology algebra $H_{\geq 1} (R/ IJ)$ is given by $\cat{B} := \{ [d(z_a^I) \w z_b^J ] \}_{a \in A, \ b\in B}$. 

For any $\ell \geq 1$, let $V_\ell$ denote the free $R$-module with formal basis elements
$$\{ v_{a,b} \mid a \in A, \ b \in B, \ \textrm{and} \ \ell = |z_a^I| + |z_b^J|  \}.$$
For convenience and conciseness, use the notation $h_{a,b} := [d(z_a^I) \w z_b^J] )$ for $a \in A$, $b \in B$.
\end{setup}

\begin{lemma}\label{cor:trivialprod}
Adopt notation and hypotheses as in Setup \ref{set:residueFieldSet}. Let 
\begingroup\allowdisplaybreaks
\begin{align*}
    \mu : \coprod_{i =1}^\infty \cat{B}^i &\to H_{\geq 1} (R/IJ), \\
    \mu([d(z_{a_1}^I) \w z_{b_1}^J ], \cdots , [d(z_{a_p}^I) \w z_{b_p}^J ] ) &= d(z_{a_1}^I ) \w z_{b_1}^J \w z_{a_2}^I \w z_{b_2}^J \w \cdots \w z_{a_p}^I \w z_{b_p}^J. \\
\end{align*}
\endgroup
Then $\mu$ is a trivial Massey operation on $R/IJ \otimes_R K_\bullet$.
\end{lemma}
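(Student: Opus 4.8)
The plan is to verify directly that the map $\mu$ satisfies the two defining conditions of a trivial Massey operation in the sense of \cite[Section 5]{avramov1998infinite}: first, that $\mu$ restricted to $\cat{B}^1$ is a section of the "take the homology class" map, i.e. $\mu([h])$ is a cycle representing $[h]$ for each basis element; and second, the recursive boundary condition, which asserts that for $p \geq 2$,
$$d\big(\mu(h_{a_1,b_1}, \dotsc, h_{a_p,b_p})\big) = \sum_{j=1}^{p-1} \overline{\mu(h_{a_1,b_1},\dotsc,h_{a_j,b_j})}\cdot \mu(h_{a_{j+1},b_{j+1}},\dotsc,h_{a_p,b_p}),$$
where $\overline{(\,\cdot\,)}$ denotes the sign-twist $(-1)^{|{-}|}$ convention of that reference, the product is the Koszul-algebra product in $K_\bullet \otimes_R R/IJ$, and we must also check the image lands in the correct degree (each application of $\mu$ raises homological degree by one over the naive sum). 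The first condition is immediate: $\mu(h_{a,b}) = d(z_a^I)\wedge z_b^J$ is exactly the cycle chosen as the basis representative in Setup \ref{set:residueFieldSet}.

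The main work is the second condition, and the strategy is to compute $d\big(d(z_{a_1}^I)\wedge z_{b_1}^J \wedge z_{a_2}^I \wedge z_{b_2}^J \wedge \cdots \wedge z_{a_p}^I \wedge z_{b_p}^J\big)$ using the Leibniz rule for the Koszul differential. Since $d^2 = 0$ kills the leading factor $d(z_{a_1}^I)$, and since each $z_{a_i}^I$ and $z_{b_i}^J$ is a cycle \emph{modulo} $I$ (resp. $J$) — so $d(z_{a_i}^I) \in I\cdot K_\bullet$ and $d(z_{b_i}^J) \in J\cdot K_\bullet$ — every term of the Leibniz expansion other than the ones that differentiate some $z_{a_i}^I$ with $i \geq 2$ or some $z_{b_i}^J$ either vanishes or, crucially, lies in $IJ \cdot K_\bullet$ and hence is zero in $K_\bullet \otimes_R R/IJ$. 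Here is where transversality enters decisively: a product of the form $d(z_{a_1}^I)\wedge(\cdots)\wedge d(z_{a_i}^I)\wedge(\cdots)$ has two factors landing in $I$, so over $R/IJ$ it is only nonzero if there is simultaneously a factor landing in $J$ "paired" with it — and organizing these surviving terms is exactly what produces the split $\sum_{j}$ on the right-hand side, with the cut point $j$ recording which $d(z_{a_j}^I)$ first pairs up. I would make this bookkeeping precise by induction on $p$, peeling off the leading block $d(z_{a_1}^I)\wedge z_{b_1}^J$ and applying the $p=2$ computation (which is essentially the Leibniz identity $d(z_1\wedge d(z_2)) = d(z_1)\wedge d(z_2)$ already appearing in the proof of Theorem \ref{thm:isoofKoszul}, now read in the algebra $K_\bullet\otimes_R R/IJ$) together with the inductive hypothesis applied to the tail.

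I expect the sign tracking to be the fussiest point rather than a conceptual obstacle: one must confirm that the signs $(-1)^{|{-}|}$ picked up from the Koszul Leibniz rule, from the graded-commutativity used to move $d(z_{a_j}^I)$ into position, and from Avramov's convention $\overline{a} = (-1)^{1+|a|}a$ all combine to give precisely the stated formula with the correct signs. The genuinely substantive step — and the one I would flag as the heart of the argument — is the transversality reduction: verifying that \emph{every} monomial in the Leibniz expansion that is not of the "admissible split" form is annihilated in $K_\bullet\otimes_R R/IJ$ because it carries a factor in $I$ and a factor in $J$ simultaneously, which uses $I\cap J = IJ$ together with the fact that the chosen representatives $z_a^I,z_b^J$ were built from Koszul cycles over $R/I$ and $R/J$. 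Once that is in hand, the recursion is forced and the identification of $\mu$ as a trivial Massey operation follows.
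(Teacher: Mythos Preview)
Your overall strategy coincides with the paper's: both verify the Massey recursion by applying the Leibniz rule to $\mu(h_{a_1,b_1},\dots,h_{a_p,b_p})$ and matching the surviving terms against $\sum_j \overline{\mu(\dots)}\,\mu(\dots)$. The paper happens to run the computation in the reverse direction, starting from the sum and contracting it to $d\mu$, but that difference is purely cosmetic.

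However, your analysis of \emph{which} Leibniz terms vanish, and why, is inverted. When $d$ hits some $z_{b_i}^J$, the resulting monomial carries both $d(z_{a_1}^I)\in I\cdot K_\bullet$ and $d(z_{b_i}^J)\in J\cdot K_\bullet$, hence lies in $IJ\cdot K_\bullet$ and \emph{vanishes} modulo $IJ$. When $d$ hits $z_{a_i}^I$ for some $i\geq 2$, the resulting monomial has two factors in $I$ and none in $J$; it lies in $I^2\cdot K_\bullet$, which is \emph{not} contained in $IJ\cdot K_\bullet$, and these are precisely the terms that \emph{survive} and assemble into the right-hand sum. So your sentence ``has two factors landing in $I$, so over $R/IJ$ it is only nonzero if there is simultaneously a factor landing in $J$'' has the implication backwards --- a factor in $J$ is what would kill such a term, not what keeps it alive. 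Relatedly, transversality is not used at all in this verification: the only ingredient needed is the trivial inclusion (element of $I$)$\cdot$(element of $J$)$\in IJ$, valid for any pair of ideals. The transversality hypothesis was already spent in Theorem~\ref{thm:isoofKoszul} to establish that the classes $[d(z_a^I)\wedge z_b^J]$ form a basis of $H_{\geq 1}(R/IJ)$; once that basis is in hand, the present lemma is pure Leibniz bookkeeping modulo $IJ$, exactly as the paper's short chain of equalities records.
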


\begin{proof}
Throughout the proof, let $\overline{z} := (-1)^{|z|+1} z$. By definition, $\mu$ is well defined on singleton elements; inductively, one computes:
\begingroup\allowdisplaybreaks
\begin{align*}
    &\sum_{i=1}^{p-1} \overline{\mu (h_{a_1,b_1} , \dots , h_{a_i,b_i})} \mu (h_{a_{i+1}, b_{i+1}} , \dots , h_{a_p,b_p} ) \\
    =& \sum_{i=1}^{p-1} (-1)^{1+\sum_{j=1}^i |z_{a_j}^I \w z_{b_j}^J|} d(z_{a_1}^I ) \w z_{b_1}^J \w \cdots \w d(z_{a_{i+1}}^I) \w z_{b_{i+1}}^J  \w \cdots \w z_{a_p} \w z_{b_p} \\
    =& \sum_{i=1}^{p-1} (-1)^{1+\sum_{j=1}^i |z_{a_j}^I \w z_{b_j}^J|} d(z_{a_1}^I ) \w z_{b_1}^J \w \cdots \w d(z_{a_{i+1}}^I \w z_{b_{i+1}}^J)  \w \cdots \w z_{a_p} \w z_{b_p} \\
    = & (-1)^{|z_{a_1}^I \w z_{b_1}^J|+1} d(z_{a_1}^I) \w z_{b_1}^J \w d \big( z_{a_2}^I \w z_{b_2}^J \w \cdots \w z_{a_p}^I \w z_{b_p}^J \big) \\
    = & d \Big( d(z_{a_1}^I ) \w z_{b_1}^J \w z_{a_2}^I \w z_{b_2}^J \w \cdots \w z_{a_p}^I \w z_{b_p}^J \Big)  \\
    =& d \mu (h_{a_1,b_1} , \dots , h_{a_p,b_p} ). \\
\end{align*}
\endgroup
Thus, $\mu$ is a trivial Massey operation by definition. 
\end{proof}

With explicit generators of the Koszul homology $H_\bullet (R/IJ)$ and a quite simple trivial Massey operation on the algebra $R/IJ \otimes_R K_\bullet$, we can use a construction of Golod to produce an explicit minimal free resolution of the residue field $k$ over the ring $R/IJ$.

\begin{cor}\label{cor:resofk}
Adopt notation and hypotheses as in Setup \ref{set:residueFieldSet} and let $\mu$ be the trivial Massey operation of Lemma \ref{cor:trivialprod}. Let $(T_\bullet, \partial_\bullet)$ denote the complex with
\begingroup\allowdisplaybreaks
\begin{align*}
    T_n &:= \bigoplus_{p+h+i_1 + \cdots + i_p = n} K_h \otimes_R V_{i_1} \otimes_R \cdots \otimes_R V_{i_p}, \\
    \partial_n &: T_n \to T_{n-1}, \\
    \partial_n (a \otimes v_{a_1,b_1} \otimes \cdots \otimes v_{a_p,b_p} ) &= d(a) \otimes v_{a_1,b_1} \otimes \cdots \otimes v_{a_p,b_p} \\
    + (-1)^{|a|} \sum_{j=1}^p & a \mu(h_{a_{1},b_1}, \dots , h_{a_j,b_j} ) \otimes v_{a_{j+1},b_{j+1}} \otimes \cdots \otimes v_{a_p,b_p}. \\
\end{align*}
\endgroup
Then $T_\bullet$ is the minimal free resolution of the residue field $k$ over $R/IJ$. In particular, $R/IJ$ is a Golod ring. 
\end{cor}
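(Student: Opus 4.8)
The plan is to deduce this directly from Golod's construction. Write $S := R/IJ$. Since $I, J \subseteq \m$ we have $IJ \subseteq \m^2$, so the images in $S$ of a minimal generating set of $\m$ form a minimal generating set of $\m_S$, and $K^S := K_\bullet \otimes_R S$ is exactly the Koszul complex of $S$ on a minimal generating set of its maximal ideal. By Theorem \ref{thm:isoofKoszul} the positive Koszul homology $H_{\geq 1}(S)$ has the explicit $k$-basis $\cat{B} = \{h_{a,b}\}$ recorded in Setup \ref{set:residueFieldSet}, and by Lemma \ref{cor:trivialprod} the function $\mu$ is a trivial Massey operation on $K^S$. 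Golod's theorem, together with the explicit resolution it attaches to a trivial Massey operation (see \cite[Section 5]{avramov1998infinite}), then asserts that $S$ is a Golod ring and that the complex built from $K^S$, the graded vector space $H_{\geq 1}(S)$, and $\mu$ by the standard recipe is the minimal $S$-free resolution of $k$. Thus the entire content of the corollary is already in hand, and what remains is to recognize $(T_\bullet, \partial_\bullet)$ as that complex.

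Concretely I would proceed in three steps. First, identify $\bigoplus_\ell V_\ell$ with a degree-shift of $H_{\geq 1}(S)$ by sending the formal basis vector $v_{a,b}$ to the class $h_{a,b}$, so that $V_{i_1} \otimes_R \cdots \otimes_R V_{i_p}$ becomes the length-$p$ component of the relevant tensor algebra over $S$; matching homological degrees then identifies the summands $K_h \otimes V_{i_1} \otimes \cdots \otimes V_{i_p}$ with the homogeneous pieces of the Golod complex $K^S \otimes_S T_S(\shift H_{\geq 1}(S))$. Second, match the differential: the summand $d(a) \otimes v_{a_1,b_1} \otimes \cdots$ is the Koszul part $d^{K^S} \otimes \mathrm{id}$, while the sum $(-1)^{|a|} \sum_{j=1}^p a\, \mu(h_{a_1,b_1}, \dots, h_{a_j,b_j}) \otimes v_{a_{j+1},b_{j+1}} \otimes \cdots$ is the Massey part of the Golod differential; the sign $(-1)^{|a|}$ and the convention $\overline{z} = (-1)^{|z|+1}z$ used in the proof of Lemma \ref{cor:trivialprod} are precisely what force $\partial^2 = 0$, via the identity $d\mu(\underline h) = \sum_i \overline{\mu(h_1, \dots, h_i)}\, \mu(h_{i+1}, \dots, h_p)$ established there. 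Third, minimality: every entry of the matrix of $\partial_\bullet$ lies in $\m_S$, since those of $d^{K^S}$ do and each $\mu(h_{a_1,b_1}, \dots, h_{a_j,b_j})$ lies in $\m_S K^S$ because it carries the factor $d(z_{a_1}^I) \in I K^S$; equivalently, minimality is automatic once $S$ is known to be Golod. The closing assertion that $R/IJ$ is a Golod ring is then immediate.

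I do not expect a genuine obstacle here: the two substantive inputs --- the Koszul homology basis (Theorem \ref{thm:isoofKoszul}) and the trivial Massey operation (Lemma \ref{cor:trivialprod}) --- have already been established, so the corollary is the routine wrap-up in which one checks that the displayed complex is exactly the output of Golod's machinery. The only point calling for care is the bookkeeping of sign conventions and the homological shift relating the $V_\ell$ to $H_{\geq 1}(S)$; reconciling these with the normalization used in \cite{avramov1998infinite} is the single slightly fiddly step.
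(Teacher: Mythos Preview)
Your proposal is correct and matches the paper's approach exactly: the paper gives no explicit proof of this corollary, simply invoking Golod's construction (via \cite[Section 5]{avramov1998infinite}) once the trivial Massey operation of Lemma~\ref{cor:trivialprod} is in hand. Your write-up in fact supplies more detail than the paper does, spelling out the identification of $(T_\bullet,\partial_\bullet)$ with Golod's complex and the minimality check.
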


\begin{remark}
 Work of Herzog and Steurich (see \cite{herzog1979golodideale}) has already shown that $R/IJ$ is a Golod ring for $I$ and $J$ transverse. However, their work did not give an explicit trivial Massey operation nor the minimal free resolution of the residue field. It is worth noting that in general, not all products of ideals are Golod (see, for example, \cite[Example 2.1]{de2016products}).
\end{remark}

\begin{remark}
In general, a Golod ring may admit trivial Massey operation that is quite complicated, making the construction of the minimal free resolution of $k$ much more difficult to formulate. Another class of ideals for which the Koszul complex admits a trivial Massey operation that can be written explicitly is given by strongly stable ideals; see work of Peeva \cite[Corollary 4.2]{peeva19960}.
\end{remark}

\section{Vanishing of Obstructions}\label{sec:VanishingofObs}

In this section, we show that the Avramov obstructions of Definition \ref{def:avramovobstructions} are trivial for quotients defined by products of sequentially transverse ideals as in Setup \ref{setup:obstructionsetup}. In particular, this implies that transverse ideal with DG-algebra resolutions are such that the corresponding Avramov obstructions for the product vanish. This leads us to ask whether the complex of Definition \ref{def:starprod} admits the structure of an associative DG-algebra whenever the complexes $F$ and $G$ do. We conclude with some comments on the cases for which an algebra structure does exist.

\begin{prop}\label{prop:extendproduct}
Let $(F_\bullet, d_\bullet)$ denote a free resolution of $R/I$ for some ideal $I \subset R$. Assume that there exists a product $\cdot : F_1 \otimes F_i \to F_{i+1}$ satisfying
\begin{enumerate}[(a)]
    \item $d_{i+1} (f_1 \cdot f_i ) = d_1 (f_1) f_i - f_1 \cdot d_i (f_i)$, and
    \item $f_1 \cdot (f_1 \cdot f_i) = 0$.
\end{enumerate}
Then for any complete intersection $\mfa \subseteq I$, $F_\bullet$ is a DG-module over the Koszul complex $K_\bullet$ resolving $R/\mfa$.
\end{prop}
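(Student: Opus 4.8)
The plan is to build the DG $K_\bullet$-module structure on $F_\bullet$ explicitly, starting from the generators of $K_\bullet$ and extending multiplicatively. Write $\mfa = (a_1, \dots, a_c)$, so that $K_\bullet = \bigwedge^\bullet R^c$ is the Koszul complex on $a_1, \dots, a_c$ with exterior generators $e_1, \dots, e_c$ in degree $1$ and Koszul differential $d^K(e_j) = a_j$. Since $\mfa \subseteq I$ and $F_\bullet$ resolves $R/I$, for each $j$ the element $a_j \in I$ is a cycle in $F_0 = R$, hence (by acyclicity, as $a_j$ maps to $0$ in $R/I$) there is an element $\tilde e_j \in F_1$ with $d_1(\tilde e_j) = a_j$. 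Fix such lifts. The idea is to \emph{define} the action of $e_j$ on $F_\bullet$ by $e_j \cdot f_i := \tilde e_j \cdot f_i$ using the given product $\cdot : F_1 \otimes F_i \to F_{i+1}$, and then extend to all of $K_\bullet$ by declaring $e_{j_1} \wedge \cdots \wedge e_{j_p}$ to act as the iterated product $\tilde e_{j_1} \cdot (\tilde e_{j_2} \cdot ( \cdots \cdot (\tilde e_{j_p} \cdot f_i) \cdots ))$; since $K_p$ is free on the wedges $e_{j_1} \wedge \cdots \wedge e_{j_p}$ with $j_1 < \cdots < j_p$, this determines an $R$-bilinear map $K_p \otimes_R F_i \to F_{i+p}$, provided the iterated products are well defined on the basis, which they are by construction.

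The work is then to verify the DG-module axioms: (i) the Leibniz rule $d(\kappa \cdot f) = d^K(\kappa) \cdot f + (-1)^{|\kappa|} \kappa \cdot d(f)$; (ii) associativity $\kappa \cdot (\kappa' \cdot f) = (\kappa \wedge \kappa') \cdot f$; and (iii) unitality, which is immediate. For the Leibniz rule I would induct on $p = |\kappa|$. The base case $p = 1$ is exactly hypothesis (a): $d_{i+1}(\tilde e_j \cdot f_i) = d_1(\tilde e_j) f_i - \tilde e_j \cdot d_i(f_i) = a_j f_i - \tilde e_j \cdot d_i(f_i)$, and since $d^K(e_j) = a_j$ this is $d^K(e_j) \cdot f_i + (-1)^{1} e_j \cdot d_i(f_i)$. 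The inductive step writes $\kappa = e_j \wedge \kappa''$ (so $\kappa \cdot f = \tilde e_j \cdot (\kappa'' \cdot f)$), applies the $p=1$ case to the outer product and the inductive hypothesis to $\kappa'' \cdot f$, and then reorganizes signs; this is a routine but slightly delicate sign bookkeeping. For associativity, the key point is that the iterated product on basis wedges must be \emph{alternating and associative} enough to descend: one needs $\tilde e_j \cdot (\tilde e_j \cdot f) = 0$, which is precisely hypothesis (b), and one needs $\tilde e_j \cdot (\tilde e_\ell \cdot f) = - \tilde e_\ell \cdot (\tilde e_j \cdot f)$ up to a term that vanishes in homology — but here we need it on the nose at the chain level.

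The main obstacle is exactly this last point: making the action \emph{strictly} associative (equivalently, making the iterated product well-defined independent of how we bracket and order, so that $e_{j_1} \wedge \cdots \wedge e_{j_p}$ really acts as claimed). Hypothesis (b) only gives $\tilde e_j \cdot (\tilde e_j \cdot f) = 0$, not antisymmetry $\tilde e_j \cdot (\tilde e_\ell \cdot f_i) + \tilde e_\ell \cdot (\tilde e_j \cdot f_i) = 0$. I would handle this by the standard polarization trick: since $\mfa$ is a complete intersection, $(a_j + a_\ell) \in I$ as well, and one may choose the lift of $a_j + a_\ell$ to be $\tilde e_j + \tilde e_\ell$; applying (b) to this lift and expanding gives $\tilde e_j \cdot (\tilde e_\ell \cdot f) + \tilde e_\ell \cdot (\tilde e_j \cdot f) + \tilde e_j \cdot(\tilde e_j \cdot f) + \tilde e_\ell \cdot (\tilde e_\ell \cdot f) = 0$ \emph{if} hypothesis (b) is read as ``$f_1 \cdot (f_1' \cdot f_i) + f_1' \cdot (f_1 \cdot f_i) = 0$ for all $f_1, f_1' \in F_1$'' — which is the bilinearized form of (b) and is what one actually gets by polarizing (b) itself. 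With antisymmetry in hand, the iterated product on ordered wedges extends unambiguously to an alternating multilinear action of $\bigwedge^\bullet R^c = K_\bullet$, and associativity follows formally. Thus the proof reduces to: choose lifts, polarize (b) to get antisymmetry, define the action on basis wedges, and check Leibniz by induction using (a). I expect the sign computations in the Leibniz induction to be the most tedious part, but conceptually straightforward.
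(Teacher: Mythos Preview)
Your approach is correct and essentially the same as the paper's: choose lifts $\tilde e_j \in F_1$ (equivalently, a comparison map $\phi_1 : K_1 \to F_1$), let $K_1$ act via the given product, and extend. The paper packages your polarization and associativity checks more slickly by observing that the $K_1$-action extends tautologically to an action of the tensor algebra $T(K_1)$, which then factors through $\bigwedge^\bullet K_1 = K_\bullet$ precisely because hypothesis~(b) kills $k_1 \otimes k_1$ (and hence, by the polarization you wrote down, kills $k_1 \otimes k_1' + k_1' \otimes k_1$); the content is identical to what you outlined.
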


\begin{proof}
Let $\phi : K_\bullet \to F_\bullet$ denote any comparison map extending the identity in homological degree $0$. Define a module action $* : K_1 \otimes F_i \to F_{i+1}$ by 
$$k_1 * f_i := \phi_1 (k_1) \cdot f_i \in F_{i+1}.$$
This extends to a module action of $T(K_1)$ (the tensor algebra) on $F_\bullet$; moreover, since $k_1* (k_1 * f_i) = 0$, this action factors through the exterior algebra $\bigwedge^\bullet K_1 = K_\bullet$. 
\end{proof}

\begin{remark}
Observe that if $I$ is as in the statement of Proposition and $\mfa \subseteq I$ is any complete intersection, then the Avramov obstruction $o^f (R/I)$ vanishes, where $f : R \to R/\mfa$, by Theorem \ref{thm:obstructions}. 
\end{remark}

\begin{setup}\label{setup:obstructionsetup}
Let $I_1 , \dots , I_n$ denote a family of sequentially transverse ideals. Let $(F_\bullet^i, d_\bullet^i)$ denote a free resolution of $R/I_i$ for each $i=1 , \dots , n$. Assume that for each $i$ and $j$, there exists a product $\cdot : F_1^i \otimes F_j^i \to F_{j+1}^i$ satisfying
\begin{enumerate}[(a)]
    \item $d_{j+1}^i (f_1^i \cdot f_j^i ) = d_1^i (f_1^i) f_j^i - f_1^i \cdot d_j (f_j^i)$, and
    \item $f_1^i \cdot (f_1^i \cdot f_j^i) = 0$.
\end{enumerate}
\end{setup}

\begin{remark}
If $I_1 , \dots , I_n$ is a family of sequentially transverse ideals, each of which admits a minimal free resolution with the structure of an associative DG algebra, then the hypotheses of Setup \ref{setup:obstructionsetup} are satisfied by Definition \ref{def:dga}.
\end{remark}

\begin{theorem}\label{thm:Dgmodules}
Adopt notation and hypotheses as in Setup \ref{setup:obstructionsetup}. Let $S_\bullet := (F_\bullet^1 * \cdots * F_\bullet^n)_\bullet$. Then there exists a product
$$S_1 \otimes S_j \to S_{j+1}$$
satisfying
\begin{enumerate}[(a)]
    \item $d_{j+1}^i (f_1^i \cdot f_j^i ) = d_1^i (f_1^i) f_j^i - f_1^i \cdot d_j (f_j^i)$, and
    \item $f_1^i \cdot (f_1^i \cdot f_j^i) = 0$.
\end{enumerate}
\end{theorem}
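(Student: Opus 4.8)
The plan is to induct on $n$, the number of ideals, so that the essential content is the case $n=2$: given free resolutions $F_\bullet$ of $R/I$ and $G_\bullet$ of $R/J$ each equipped with a partial product $F_1 \otimes F_j \to F_{j+1}$ and $G_1 \otimes G_j \to G_{j+1}$ satisfying the Leibniz rule (a) and the ``square-zero'' condition (b), one must produce such a partial product on the star-product $(F*G)_\bullet$, whose acyclicity we already know from Proposition \ref{prop:resofTransverse}. The first step is to recall the explicit module description of the star product in each degree: for $n \geq 2$ one has $(F*G)_n = (F_{\geq 1} \otimes G_{\geq 1})_{n+1} = \bigoplus_{p+q=n+1,\ p,q\geq 1} F_p \otimes G_q$, while $(F*G)_1 = F_1 \otimes G_1$, with the differential in degree $1$ being $d_1^F \otimes d_1^G$ and otherwise the tensor-product differential shifted by one. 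The key observation is that a degree-$1$ element of $F*G$ is a tensor $f_1 \otimes g_1$, so a product $S_1 \otimes S_j \to S_{j+1}$ must take $(f_1\otimes g_1)$ against a summand $f_p \otimes g_q$ (with $p+q = j+1$) and land in $S_{j+1} = \bigoplus_{p'+q'=j+2} F_{p'}\otimes G_{q'}$. The natural candidate, dictated by the two partial products already present, is
\[
(f_1 \otimes g_1)\cdot (f_p \otimes g_q) := (f_1 \cdot f_p)\otimes g_q \pm f_p \otimes (g_1 \cdot g_q),
\]
with a sign to be pinned down (most likely $(-1)^p$ or similar, coming from the Koszul sign in the tensor-product differential); for the bottom degree $j=1$ one similarly sets $(f_1\otimes g_1)\cdot(f_1'\otimes g_1') = (f_1\cdot f_1')\otimes g_1' \pm f_1'\otimes(g_1\cdot g_1')$ landing in $F_2\otimes G_1 \oplus F_1\otimes G_2 \subseteq S_2$.

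The bulk of the work is then the verification of (a) and (b) for this formula, which is a bookkeeping computation with signs. For (a), one expands $d^{S}\big((f_1\otimes g_1)\cdot(f_p\otimes g_q)\big)$ using the shifted tensor-product differential on $F*G$, substitutes the Leibniz rules for the $F$- and $G$-products, and collects terms; the terms involving $d_1^F(f_1)$ and $d_1^G(g_1)$ should assemble into $d_1^S(f_1\otimes g_1)\cdot(f_p\otimes g_q)$ (the summand $d_1^F(f_1)f_p \otimes g_q \pm f_p \otimes d_1^G(g_1)g_q$, which makes sense because $d_1^F(f_1)\in F_0 = R$ acts on $F_p$ by the resolution's module structure and likewise for $G$), while the remaining terms assemble into $(f_1\otimes g_1)\cdot d^S(f_p\otimes g_q)$; the only subtlety is that the differential $d^S$ in degree $2$ is $d_1^F\otimes d_1^G$ rather than a shifted tensor differential, so the computation must be run separately for $j=1$ versus $j\geq 2$. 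For (b), $(f_1\otimes g_1)\cdot\big((f_1\otimes g_1)\cdot(f_p\otimes g_q)\big)$ expands into four terms: two of the form $(f_1\cdot(f_1\cdot f_p))\otimes g_q$ and $f_p\otimes(g_1\cdot(g_1\cdot g_q))$, each vanishing by hypothesis (b) for $F$ and $G$ respectively, and two cross terms $\pm(f_1\cdot f_p)\otimes(g_1\cdot g_q)$ that must cancel against each other by the sign choice; this cross-term cancellation is what forces the precise sign in the definition of the product.

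The main obstacle I expect is exactly this sign discipline: the star product is a shift of a tensor product, so every manipulation carries Koszul signs that interact with the degree-shift, and the partial product is only defined against $F_1$ (resp.\ $G_1$), not across all degrees, so one cannot simply quote the standard fact that the tensor product of two DG algebras is a DG algebra. Instead one has to hand-check that the restricted product, shifted into the star grading, still satisfies (a) and (b); getting the sign to simultaneously make (a) hold and make the cross terms in (b) cancel is the delicate point. Once the $n=2$ case is settled, the inductive step is essentially formal: by the iterated-star-product remark after Proposition \ref{prop:resofTransverse}, $S_\bullet = (F^1 * \cdots * F^{n})_\bullet = \big((F^1*\cdots*F^{n-1}) * F^{n}\big)_\bullet$, the family $I_1\cdots I_{n-1}, I_n$ is transverse by sequential transversality, the inductive hypothesis supplies a partial product on $(F^1*\cdots*F^{n-1})_\bullet$ satisfying (a) and (b), and applying the $n=2$ construction to this resolution together with $F^n_\bullet$ yields the desired product on $S_\bullet$.
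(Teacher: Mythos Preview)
Your inductive reduction to $n=2$ and your description of the module structure of $(F*G)_\bullet$ are fine, but the ``natural candidate'' formula
\[
(f_1 \otimes g_1)\cdot (f_p \otimes g_q) \;=\; (f_1 \cdot f_p)\otimes g_q \;\pm\; f_p \otimes (g_1 \cdot g_q)
\]
does \emph{not} satisfy the Leibniz rule, and no choice of sign rescues it. The issue is the one you flagged but then mis-handled: $d_1^{S}(f_1\otimes g_1) = d_1^F(f_1)\,d_1^G(g_1)$ is a \emph{single scalar} in $R=S_0$, so the required term $d_1^S(f_1\otimes g_1)\cdot(f_p\otimes g_q)$ equals $d_1^F(f_1)d_1^G(g_1)\,f_p\otimes g_q$. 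But if you expand $d^S$ of your candidate and use the Leibniz rules on $F$ and $G$, the scalar contributions that appear are $d_1^F(f_1)\,f_p\otimes g_q$ and $\pm(-1)^p d_1^G(g_1)\,f_p\otimes g_q$, which are linear in $d_1^F(f_1)$ and $d_1^G(g_1)$ separately and can never combine to the \emph{product} $d_1^F(f_1)d_1^G(g_1)$. Your parenthetical that these ``should assemble into $d_1^F(f_1)f_p\otimes g_q \pm f_p\otimes d_1^G(g_1)g_q$'' mistakes $d_1^F\otimes d_1^G$ for a sum-type differential; it is a product, and that forces a different ansatz.

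The fix used in the paper is to make the product asymmetric in $F$ and $G$: for $b>1$ one sets
\[
(f_1 \otimes g_1)\cdot(f_a\otimes g_b) \;=\; (-1)^a\, d_1^F(f_1)\,f_a \otimes (g_1\cdot g_b),
\]
so that $d_1^F(f_1)$ enters only as a scalar multiplier on $f_a$, while the honest partial product is used only on the $G$-side. Then differentiating the $G$-factor produces exactly $d_1^F(f_1)d_1^G(g_1)\,f_a\otimes g_b$, matching $d_1^S(f_1\otimes g_1)\cdot(f_a\otimes g_b)$. When $b=1$ one needs an additional correction term $d_1^G(g_b)\,(f_1\cdot f_a)\otimes g_1$ to account for the modified differential at the bottom of $(F*G)_\bullet$. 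The square-zero property (b) then follows because the iterated product picks up $g_1\cdot(g_1\cdot g_b)=0$, $d_1^F(f_1)^2$ times a term with $g_1\cdot(g_1\cdot g_b)$, or (in the $b=1$ case) terms containing $g_1\cdot g_1=0$ or $f_1\cdot(f_1\cdot f_a)=0$. Once you replace your symmetric candidate by this asymmetric one, the rest of your plan (case split on $j=1$ versus $j\ge 2$, then induction on $n$) goes through.
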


\begin{proof}
By induction it suffices to consider the case that $I$ and $J$ are transverse ideals with $F_\bullet$, $G_\bullet$ free resolutions of $R/I$ and $R/J$, respectively. For ease of notation, elements of $F_i$ will denoted denoted $f_i$ and elements of $G_j$ will be denoted $g_j$. Define
$$(f_1 \otimes g_1) \cdot (f_a \otimes g_b ) := \begin{cases}
(-1)^a d^F_1 (f_1) f_a \otimes g_1 \cdot g_b  & \textrm{if} \ b>1 \\
(-1)^a d^F_1 (f_1) f_a \otimes g_1 \cdot g_b + d_1^G(g_b) f_1 \cdot f_a \otimes g_1 & \textrm{if} \ b=1. \\
\end{cases}$$
The cycle condition will be verified directly in the relevant cases; for ease of notation in the computations, the homological degree of the relevant differential will be suppressed. Let $S_\bullet := (F * G)_\bullet$.

\textbf{Case 1:} $b >2$. 
\begingroup\allowdisplaybreaks
\begin{align*}
    d^S ( (f_1 \otimes g_1) \cdot (f_a \otimes g_b) ) &= (-1)^a d^F (f_1) d^F (f_a) \otimes g_1 \cdot g_b +  d^F (f_1) f_a \otimes d^G (g_1 \cdot g_b) \\
    &= (-1)^a d^F (f_1) d^F (f_a) \otimes g_1 \cdot g_b + d^F (f_1) d^G (g_1) f_a \otimes g_b \\
    & - d^F (f_1) f_a \otimes g_1 \cdot d^G (g_b) \\
    &= d^S (f_1 \otimes g_1) f_a \otimes g_b - (f_1 \otimes g_1) \cdot d^S (f_a \otimes g_b) \\
\end{align*}
\endgroup
\textbf{Case 2:} $b=2$.
\begingroup\allowdisplaybreaks
\begin{align*}
    d^S ( (f_1 \otimes g_1) \cdot (f_a \otimes g_b) ) &=  (-1)^a d^F (f_1) d^F (f_a) \otimes g_1 \cdot g_b +  d^F (f_1) f_a \otimes d^G (g_1 \cdot g_b) \\
    &= (-1)^a d^F (f_1) d^F (f_a) \otimes g_1 \cdot g_b + d^F (f_1) d^G (g_1) f_a \otimes g_b \\
    & - d^F (f_1) f_a \otimes g_1 \cdot d^G (g_b) - (-1)^a d^G (d^G(g_b)) f_1 \cdot f_a \otimes g_1 \\
    &=d^S (f_1 \otimes g_1) f_a \otimes g_b - (f_1 \otimes g_1) \cdot d^S (f_a \otimes g_b) \\
\end{align*}
\endgroup
\textbf{Case 3:} $b=1$.
\begingroup\allowdisplaybreaks
\begin{align*}
    d^S ( (f_1 \otimes g_1) \cdot (f_a \otimes g_b) ) &=  (-1)^a d^F (f_1) d^F (f_a) \otimes g_1 \cdot g_b +  d^F (f_1) f_a \otimes d^G (g_1 \cdot g_b) \\
    &+ d^G (g_b) d^F(f_1) f_a \otimes g_1 - d^G(g_b) f_1 \cdot d^F (f_a) \otimes g_1 \\
    &= (-1)^a d^F (f_1) d^F (f_a) \otimes g_1 \cdot g_b + d^F (f_1) d^G (g_1) f_a \otimes g_b \\
    &- d^G (g_b) f_1 \cdot d^F (f_a) \otimes g_1 \\
    &= d^S (f_1 \otimes g_1) f_a \otimes g_b \\
    &- \big( (-1)^{a-1} d^F (f_1) d^F (f_a) \otimes g_1 \cdot g_b + d^G (g_b) f_1 \cdot d^F (f_a) \otimes g_1 \big) \\
    &= d^S (f_1 \otimes g_1) f_a \otimes g_b - (f_1 \otimes g_1) \cdot d^S (f_a \otimes g_b) \\
\end{align*}
\endgroup
It remains to show that $(f_1 \otimes g_1) \cdot \big( (f_1 \otimes g_1) \cdot (f_a \otimes g_b) \big) = 0$. This splits into two cases:

\textbf{Case 1:} $b>1$. 
\begingroup\allowdisplaybreaks
\begin{align*}
    (f_1 \otimes g_1) \cdot \big( (f_1 \otimes g_1) \cdot (f_a \otimes g_b) \big) &= (f_1 \otimes g_1) \cdot ((-1)^a d^F (f_1) f_a \otimes g_1 \cdot g_b \\
    &= d^F (f_1)^2 f_a \otimes g_1 \cdot (g_1 \cdot g_b) \\
    &= 0 \\
\end{align*}
\endgroup
\textbf{Case 2:} $b=1$.
\begingroup\allowdisplaybreaks
\begin{align*}
     (f_1 \otimes g_1) \cdot \big( (f_1 \otimes g_1) \cdot (f_a \otimes g_b) \big) &= (f_1 \otimes g_1) \cdot \big( (-1)^a d^F_1 (f_1) f_a \otimes g_1 \cdot g_b + d_1^G(g_b) f_1 \cdot f_a \otimes g_1 \big) \\
     &= d^F (f_1)^2 f_a \otimes g_1 \cdot (g_1 \cdot g_b) + (-1)^{a+1}d^G (g_b) d^F (f_1) f_1 \cdot f_a \otimes g_1 \cdot g_1 \\
     &+ d^G (g_b) d^G (g_1) f_1 \cdot (f_1 \cdot f_b) \otimes g_1 \\
     &= 0 .\\
\end{align*}
\endgroup
\end{proof}

\begin{cor}\label{cor:injTor}
Adopt notation and hypotheses as in Setup \ref{setup:obstructionsetup} and define $J := I_1 \cdots I_n$. If $\mfa \subset J$ is any complete intersection, then the induced map
$$\tor_i^R (R/J,k) \to \tor^S_i (R/J , k)$$
is injective for all $i \geq 2$, where $S = R/\mfa$. 
\end{cor}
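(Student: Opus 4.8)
The plan is to connect Corollary \ref{cor:injTor} with the machinery developed earlier by identifying the induced map on Tor as essentially the one appearing in the Avramov obstruction of Definition \ref{def:avramovobstructions}. Concretely, since $\mfa \subset J$ is a complete intersection, write $f : R \to S = R/\mfa$ for the quotient map; the $R$-free resolution of $S$ is the Koszul complex $K_\bullet$, which admits the structure of an associative DG-algebra, so Theorem \ref{thm:obstructions} and the graded decomposition preceding Proposition \ref{prop:vanishofObs} apply. The first step is to observe that by Theorem \ref{thm:Dgmodules}, the star-product resolution $S_\bullet = (F^1 * \cdots * F^n)_\bullet$ of $R/J$ carries a product $S_1 \otimes S_j \to S_{j+1}$ satisfying conditions (a) and (b) of Setup \ref{setup:obstructionsetup}; hence by Proposition \ref{prop:extendproduct}, $S_\bullet$ is a DG-module over $K_\bullet$. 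Therefore no obstruction to a DG-module structure exists, and by the contrapositive of Theorem \ref{thm:obstructions} — or rather by a direct analysis — the Avramov obstruction $o^f(R/J)$ should vanish in each positive degree.

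The second and central step is to argue that the vanishing of $o_i^f(R/J)$ for all $i > 0$ is exactly what forces the map $\tor_i^R(R/J,k) \to \tor_i^S(R/J,k)$ to be injective for $i \geq 2$. Here I would unwind the definition: the Avramov obstruction $o_i^f(R/J)$ is the kernel of the map
$$\frac{\tor_i^R(R/J,k)}{\tor_1^R(S,k)\cdot \tor_{i-1}^R(R/J,k)} \to \tor_i^S(R/J,k).$$
So a priori the kernel of $\tor_i^R(R/J,k) \to \tor_i^S(R/J,k)$ contains both the image of the multiplication $\tor_1^R(S,k)\cdot \tor_{i-1}^R(R/J,k)$ and (a lift of) $o_i^f(R/J)$. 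Since $o_i^f(R/J) = 0$, the kernel is precisely the image of that multiplication map. Thus the real content is to show that $\tor_1^R(S,k)\cdot \tor_{i-1}^R(R/J,k) = 0$ inside $\tor_i^R(R/J,k)$ for $i \geq 2$; equivalently, the module action of $\tor_+^R(S,k)$ on $\tor_\bullet^R(R/J,k)$ is trivial in positive degrees. This is where the explicit computations of the previous sections enter: by Corollary \ref{cor:resofk} (or directly from Lemma \ref{cor:trivialprod} and Theorem \ref{thm:isoofKoszul}), $R/J$ is a Golod ring, and in particular the Koszul homology algebra multiplication on $H_{\geq 1}(R/J) = \tor_{\geq 1}^R(R/J,k)$ — more precisely the relevant induced products — are trivial. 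The action of $\tor_1^R(S,k) = H_1(R/\mfa)$ factors through the comparison map $\phi : K_\bullet \to S_\bullet$ and then multiplies by a degree-one Koszul homology class of $R/J$; by the triviality of the Tor-algebra multiplication of $R/J$ this composite lands in zero for $i \geq 2$. (Care is needed at $i = 2$: the product $\tor_1^R(S,k) \cdot \tor_1^R(R/J,k) \subseteq \tor_2^R(R/J,k)$ is the one that must vanish, which is exactly the statement that the image of $H_1(R/\mfa)$ in $H_1(R/J)$ multiplies trivially — this follows because $\mfa \subseteq J$ consists of ``Golod'' relations whose squares and products with other degree-one classes vanish in $H_\bullet(R/J)$.)

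I anticipate the main obstacle to be making precise the chain of identifications: (i) that the map of Corollary \ref{cor:injTor} coincides with the map in Definition \ref{def:avramovobstructions} for $f$ and $M = R/J$ — this requires knowing that $\tor^S_i(R/J,k) = H_i(S_\bullet \otimes_S S_\bullet \text{-resolution}\ldots)$, i.e.\ that the target Tor is computed via the DG-module structure; and (ii) that the DG-module structure from Proposition \ref{prop:extendproduct} together with Golodness genuinely forces $\tor_+^R(S,k) \cdot \tor_{\geq 1}^R(R/J,k) = 0$, rather than just $o^f = 0$. The cleanest route is probably: from Proposition \ref{prop:extendproduct}, $S_\bullet$ is a DG $K_\bullet$-module, so $S_\bullet \otimes_R k$ is a DG $(K_\bullet \otimes_R k)$-module; but $K_\bullet \otimes_R k$ has zero differential and its homology $H_1$ acts on $H_\bullet(S_\bullet \otimes_R k) = \tor_\bullet^R(R/J,k)$ by a graded action whose image in each degree $\geq 2$ is killed by the triviality of the Massey/Tor-algebra structure established in Section \ref{sec:Koszulhom}. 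Then the five-term exact sequence (or change-of-rings spectral sequence for $R \to S$) with the vanishing multiplication gives injectivity of $\tor_i^R(R/J,k) \to \tor_i^S(R/J,k)$ for $i \geq 2$. Alternatively, and perhaps more robustly, one invokes the standard fact that $o^f(M) = 0$ for all relevant $M$ is equivalent to $\tor_\bullet^R(-,k) \to \tor_\bullet^S(-,k)$ being injective modulo the $\tor_+^R(S,k)$-action, and then separately observes that for a Golod ring $R/J$ with $\mfa$ a complete intersection inside $J$, that action is trivial in the range $i \geq 2$.
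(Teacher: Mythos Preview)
Your proposal is correct and follows essentially the same approach as the paper: you identify that injectivity amounts to the two facts (i) $o_i^f(R/J)=0$, obtained from Theorem~\ref{thm:Dgmodules} and Proposition~\ref{prop:extendproduct} (via the contrapositive of Theorem~\ref{thm:obstructions}), and (ii) $\tor_1^R(S,k)\cdot\tor_{i-1}^R(R/J,k)=0$, obtained from the triviality of the Tor-algebra multiplication established in Section~\ref{sec:Koszulhom}. The paper's proof is exactly these two sentences; your anticipated obstacles (the five-term sequence, spectral sequences, worrying about how the module action is defined) are unnecessary, since the action of $\tor_1^R(S,k)$ factors through the ring map $\tor_\bullet^R(S,k)\to\tor_\bullet^R(R/J,k)$ followed by the internal product of the Golod Tor-algebra, which is already zero in positive degrees.
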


\begin{proof}
Observe that by triviality of the multiplication in the Tor algebra $\tor_\bullet^R (R/J , k)$, one has
$$\tor_1^R (S , k ) \cdot \tor_{i-1} (R/J , k) = 0$$
for all $i \geq 2$. The statement of the corollary is then a rephrasing of the fact that $o_i^f (R/J) = 0$, where $f : R \to S$ is the natural quotient.
\end{proof}

\begin{question}\label{question:isitDG}
Let $(R , \m , k)$ be a regular local ring with $I$ and $J$ transverse ideals in $R$. Assume that $R/I$ and $R/J$ have minimal free resolutions admitting the structure of an associative DG algebra. Then, does the minimal free resolution of $R/IJ$ admit the structure of an associative DG algebra?
\end{question}

An answer to Question \ref{question:isitDG} in either the positive or negative is interesting. If the answer is positive, then all of the desirable properties of rings with minimal algebra resolutions hold for $R/IJ$. In the negative case, however, this would be another class of rings with the property that the associated Avramov obstructions vanish, even though there does \emph{not} exist an algebra structure on the minimal free resolution. To the author's knowledge, the only other known examples of such rings were given by Srinivasan in \cite{srinivasan1992non}.

In the case that $(F*G)_\bullet$ has length $ \leq 3$, the product of Theorem \ref{thm:obstructions} \emph{does} yield an associative algebra structure (though it is already well known by work of Buchsbaum and Eisenbud in \cite{buchsbaum1977algebra} that an associative algebra structure exists). Moreover, it is known that if $F$ and $G$ are either Koszul or Taylor complexes, then $F*G$ may be endowed with the structure of an associative DG algebra by work of Geller. Thus, any counterexample to Question \ref{question:isitDG} must come from a product of transverse ideals such that at least one of the ideals is \emph{not} a complete intersection.

\section*{Acknowledgements}

Thanks to Andy Kustin and Josh Pollitz for helpful comments/corrections on earlier drafts of this paper.

\bibliographystyle{amsplain}
\bibliography{biblio}
\addcontentsline{toc}{section}{Bibliography}

\end{document}